\documentclass[a4paper,12pt]{article}

\usepackage[latin1]{inputenc}
\usepackage[T1]{fontenc}
\usepackage{setspace}
\usepackage[english]{babel}
\usepackage{amsmath}
\usepackage{amsthm}
\usepackage{amssymb}
\usepackage{mathbbol}
\usepackage{graphicx}
\usepackage{bbm}

\newtheorem{theorem}{Theorem}[section]
\newtheorem{lemma}[theorem]{Lemma}

\newtheorem{remark}[theorem]{Remark}

\begin{document}
\title{A norm-inequaltity related to affine regular hexagons}
\author{\normalsize{Reinhard Wolf}\\ \small{Fachbereich Mathematik, Universit\"at Salzburg,}\\
\small{Hellbrunnerstraße 34, A-5020 Salzburg, Austria}\\ \footnotesize{e-mail address: reinhard.wolf@sbg.ac.at}}
\date{}
\maketitle

\begin{abstract}
Let $(E, \lVert . \rVert)$ be a two-dimensional real normed space with unit sphere
$S = \{x \in E, \lVert x \rVert = 1\}$. The main result of this paper is the following:\\
Consider an affine regular hexagon with vertex set $H = \{\pm v_1, \pm v_2, \pm v_3\} \subseteq S$ inscribed to $S$. Then we have
$$\min_i \max_{x \in S}{\lVert x - v_i \rVert + \lVert x + v_i \rVert} \leq 3.$$
From this result we obtain
$$\min_{y \in S} \max_{x \in S}{\lVert x - y \rVert + \lVert x + y \rVert} \leq 3,$$
and equality if and only if $S$ is a parallelogram or an affine
regular hexagon.
\end{abstract}
\section{Introduction}
Let $(E,\lVert . \rVert)$ be a two-dimensional real normed space with unit sphere \\
$S = \{x \in E, \lVert x \rVert = 1 \}$. This paper studies the function
$$f: S \rightarrow \mathbbm{R}, \quad f(y) = \max_{x \in \text{S}}{\lVert x-y \rVert + \lVert x+y \rVert}, \qquad y \text{ in S}.$$
In particular, we are interested in upper bounds for $\min_{y \in \text{S}}{f(y)}$.\\
The following two examples are of special interest (see Theorem 3.2):\\
\begin{enumerate}
    \item Let $S$ be a parallelogram, i.e. $(E, \lVert . \rVert)$ is isometrically isomorphic to $\mathbbm{R}^2$ equipped with the usual 1-norm.\\
    It is easy to check, that $f(S) = [3,4]$ and therefore
    $$\min_{y \in \text{S}}{f(y)} = 3.$$
    Further note, that $f(y) = 3$ if and only if $y \in \{\pm (\frac{1}{2},\frac{1}{2}), \pm (\frac{1}{2},-\frac{1}{2})\}$.\\
    So up to isometries there exists exactly one point y in $S$, such that $f(y) = 3$.

    \item Let $S$ be an affine regular hexagon (the affine image of a Euclidian equilateral hexagon). Routine calculations show, that $f(S) = \{3\}$ and therefore
    $$\min_{y \in \text{S}}{f(y)} = 3.$$
    In contrast to the first example all points y in S have the property that $f(y) = 3$.
\end{enumerate}
The main result of this paper is the following (Theorem 3.1):\\\newline
Consider an affine regular hexagon with vertex set $H = \{\pm v_1, \pm v_2, \pm v_3 \} \subseteq S$ inscribed to $S$. Then we have
$$\min (f(v_1),f(v_2),f(v_3)) \leq 3.$$
From this result we obtain an upper bound for $\min_{y \in \text{S}}{f(y)}$, namely we show (Theorem 3.2):\\
$\min_{y \in \text{S}}{f(y)} \leq 3$ and equality if and only if $S$ is a parallelogram or an affine regular hexagon.\\
\newline
This estimate is an improvement of a result given by M. Baronti, E.
Casini and P.L. Papini (see Proposition 2.8 in \cite{1}):\\
They showed, that
$$\min_{y \in \text{S}}{f(y)} \leq \frac{1 + \sqrt{1 + 4p}}{2},$$
where $p$ denotes the perimeter (measured by the norm) of $S$.\\
It is well known, that $6 \leq p \leq 8$ (for example see Satz 11.9
in \cite{2}) and hence
$$3 \leq \frac{1 + \sqrt{1 + 4p}}{2}.$$
We end this section with some well known facts about affine regular hexagons inscribed to the unit sphere $S$:\\
\newline
Fix some point $v_1$ in $S$. Since the function $x \mapsto \lVert x - v_1 \rVert$ is continuous on $S$ and $\lVert v_1 - v_1 \rVert = 0, \lVert (-v_1) - v_1 \rVert = 2$, we find some $v_2$ on $S$ (going from $v_1$ to $-v_1$ in counter-clockwise direction), such that $\lVert v_2 - v_1 \rVert = 1$.\\
With $v_3 = v_2 - v_1$ we obtain an affine regular hexagon with vertex set $H = \{\pm v_1, \pm v_2, \pm v_3 \} \subseteq S$ inscribed to $S$.\\
On the other hand it is easy to see, that an affine regular hexagon with vertex set $H = \{\pm v_1, \pm v_2, \pm v_3 \} \subseteq S$ inscribed to $S$ (the arrangement of the vertices is assumed in counter-clockwise direction: $v_1, v_2, v_3, -v_1, -v_2, -v_3, v_1$) has the property, that $v_3 = v_2 - v_1$.\\
So in the sequel an affine regular hexagon with vertex set \\
$H = \{\pm v_1, \pm v_2, \pm v_3 \} \subseteq S$ inscribed to $S$ is
given by
\begin{itemize}
    \item a fixed point $v_1$ in $S$
    \item a point $v_2$ in S with $\lVert v_2 - v_1 \rVert = 1$, found by going from $v_1$ to $-v_1$ in counter-clockwise direction
    \item $v_3 = v_2 - v_1$
\end{itemize}
\section{Notation}
Let $(E, \lVert . \rVert)$ be a two-dimensional real normed space.\\
The unit sphere of E is denoted by $S, \quad S = \{x \in E, \lVert x \rVert = 1 \}$.\\
For $x,y$ in $E$ the closed (straight line) segment from $x$ to $y$ is denoted by $\overline{xy}, \quad \overline{xy} = \{(1 - \lambda)x + \lambda y, \ 0 \leq \lambda \leq 1 \}$.\\
For $x,y$ in $S$ $(y \neq -x)$ the closed (shorter) arc joining $x$ and $y$ is defined as $[x,y], \quad [x,y] = \{\lambda x + \mu y, \ \lambda, \mu \geq 0\} \cap S$.\\
Furthermore $(x,y] = [x,y]\setminus \{x\}, \quad [x,y) = [x,y]\setminus \{y\}$ and \\
$(x,y) = [x,y]\setminus \{x, y\}$.\\
The orientation of $S$ (considered as a closed curve) is always assumed to be counter-clockwise:\\
If we say $v_1, v_2, \dotsc, v_n$ are points on $S$ or defining a subset $\{v_1, v_2, \dotsc, v_n \}$ of $S$, we assume, that a walk on $S$ in counter-clockwise direction starting in $v_1$ first
 reaches $v_2$, then $v_3, \dotsc $, then $v_{n-1}$ and ends in $v_n$.\\
The notation $\{\pm v_1, \pm v_2, \dotsc,\pm v_n\}$ is used for the set \\
$\{v_1, v_2, \dotsc, v_n,-v_1,- v_2, \dotsc,- v_n \} \subseteq S$.
\section{The results}
\begin{theorem}
    Let $(E, \lVert . \rVert)$ be a two-dimensional real normed space with unit sphere $S = \{x \in E, \lVert x \rVert = 1\}$.\\
    Consider an affine regular hexagon with vertex set $H = \{\pm v_1, \pm v_2, \pm v_3 \} \subseteq S$ inscribed to S. Then we have
    $$\min_i{\max_{x \in \text{S}}{\lVert x - v_i \rVert + \lVert x + v_i \rVert}} \leq 3.$$
\end{theorem}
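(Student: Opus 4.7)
The plan is to decompose the sphere $S$ into the six arcs cut off by the hexagon vertices and bound $g_i(x):=\lVert x-v_i\rVert+\lVert x+v_i\rVert$ on each arc. First I would use the standard monotonicity property in a 2-dimensional normed space: for any fixed $v\in S$, as $x$ traverses $S$ from $v$ to $-v$ along either side, $\lVert x-v\rVert$ rises monotonically from $0$ to $2$ (a direct consequence of convexity of the unit ball). Setting $a:=\lVert v_1+v_2\rVert$, $b:=\lVert v_1-v_3\rVert$, $c:=\lVert v_2+v_3\rVert$, this monotonicity immediately yields $g_i\le 3$ on the four arcs adjacent to $\pm v_i$, since on such an arc one of $\lVert x\mp v_i\rVert$ is bounded by $1$ and the other by $2$.

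Only two ``dangerous'' arcs per index $i$ remain; they form a centrally symmetric pair. Up to the symmetry $x\mapsto -x$ they are $[v_2,v_3]$ for $g_1$, $[v_3,-v_1]$ for $g_2$, and $[v_1,v_2]$ for $g_3$. The naive endpoint estimates give only $g_1\le a+b$, $g_2\le a+c$, $g_3\le b+c$, and applying the triangle inequality to the identities $(v_1+v_2)+(2v_1-v_2)=3v_1$, $(v_1+v_2)+(2v_2-v_1)=3v_2$, $(v_3-v_1)+(v_2+v_3)=3v_3$ shows each of $a+b$, $a+c$, $b+c$ is $\ge 3$, so these crude bounds do not suffice on their own. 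To sharpen them I would parametrize the dangerous arc for $g_1$ as $x=\alpha v_2+\beta v_3$ with $\alpha,\beta\ge 0$ and $\lVert x\rVert=1$. Reverse-triangle estimates using $v_2+r v_3 = (1+r)v_2 - r v_1 = v_1 + (1+r)v_3$ give $\lVert v_2+r v_3\rVert\ge\max(1,r)$, hence $\alpha,\beta\in[0,1]$; convexity of the unit ball applied to the chord $\overline{v_2 v_3}\subset B$ gives $\alpha+\beta\ge 1$. Consequently the two decompositions
\[
x-v_1\;=\;(2\alpha+\beta-1)\,v_3+(1-\alpha)(v_3-v_1),\qquad
x+v_1\;=\;(\alpha+2\beta-1)\,v_2+(1-\beta)(v_1+v_2)
\]
have non-negative coefficients, and applying the triangle inequality yields the linear upper bound
\[
g_1(x)\;\le\;(3-b)\,\alpha+(3-a)\,\beta+(a+b-2).
\]
Analogous linear bounds hold for $g_2$ on $[v_3,-v_1]$ and $g_3$ on $[v_1,v_2]$, with $(a,b)$ replaced by $(a,c)$ and $(b,c)$ respectively.

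The hard part will then be to conclude that, for at least one $i\in\{1,2,3\}$, the maximum of the relevant linear bound over its dangerous arc (subject to $\lVert x\rVert=1$) does not exceed $3$. In dual language this amounts to showing that the dual norm $\lVert\phi_i\rVert^\ast$ of the linear functional $\phi_i\in E^\ast$ coding the $i$-th bound is at most $5$ minus the associated diagonal sum, for some $i$. I expect to argue by contradiction: if $f(v_i)>3$ for every $i$, pick witnesses $x_i$ on the three dangerous arcs and combine the three linear inequalities with $\lVert x_i\rVert=1$ and $v_3=v_2-v_1$ to produce incompatible norm relations. The cyclic relabelling symmetry $(v_1,v_2,v_3)\mapsto(v_2,v_3,-v_1)$, under which $(a,b,c)\mapsto(c,a,b)$, should allow normalization to a single ordering (for instance $c\ge a,b$), after which the final geometric case analysis --- the truly delicate step --- should close out the proof.
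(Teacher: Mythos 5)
Your opening reduction is sound and coincides with how the paper begins: the monotonicity lemma gives $\lVert x-v_i\rVert+\lVert x+v_i\rVert\le 3$ on the four arcs adjacent to $\pm v_i$ (this is the paper's Lemma 4.2), and your parametrization $x=\alpha v_2+\beta v_3$ with $\alpha,\beta\in[0,1]$, $\alpha+\beta\ge 1$ on the remaining arc is exactly the paper's parametrization of the witness points $z_i$. Your two decompositions of $x\mp v_1$ do have non-negative coefficients and do yield $g_1(x)\le(3-b)\alpha+(3-a)\beta+(a+b-2)$; the identities showing $a+b,\,a+c,\,b+c\ge 3$ are also correct. The genuine gap is that everything after this point --- which you yourself label ``the hard part'' and ``the truly delicate step'' --- is the actual content of the theorem, and you offer no argument for it, only the expectation that a contradiction and a ``final geometric case analysis'' will close it out. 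In the paper this step is the bulk of the work: it passes to the polygonal norm $\lVert\cdot\rVert_0$ whose unit ball is the convex hull of $\{\pm v_1,\pm v_2,\pm v_3,\pm z_1,\pm z_2,\pm z_3\}$, computes $\lVert z_i\mp v_i\rVert_0$ exactly as maxima of two affine expressions in the coordinates $(x_i,y_i)$, extracts the convexity constraints $\frac{1-x_i}{y_i}+\frac{1-y_{i+2}}{x_{i+2}}\ge 1$, substitutes $a_i=1-\frac{1-x_i}{y_i}$, $b_i=1-\frac{1-y_i}{x_i}$, and then proves a tight algebraic inequality (Lemma 4.4) by a nine-case AM--GM analysis (Lemma 4.3). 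Nothing in your sketch substitutes for that.

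Beyond being incomplete, your remaining plan has two concrete obstacles. First, the maximum of your linear bound over the dangerous arc depends on the shape of that arc (equivalently on the dual norm of your functional), not only on $a$, $b$, $c$ and the witnesses, so the system of inequalities you propose to contradict is not closed; the paper avoids this by discretizing the ball to $B_0$, after which all relevant norms are explicit rational functions of finitely many parameters. Second, your estimates come from the triangle inequality in the original norm and are in general strictly weaker than the exact polygonal-norm values; since the target inequality is sharp (equality holds for the affine regular hexagon, where your bound already equals $3$ identically on the arc), there is essentially no slack available to absorb a lossy estimate, and it is not clear that a ``min over $i$ of the max of your linear bounds'' is ever $\le 3$ in general --- that statement is stronger than the theorem and is unproven. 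You would also need to handle the boundary cases ($\alpha+\beta=1$, or $\alpha=1$, or $\beta=1$) separately, as the paper does before invoking its algebraic lemma.
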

\begin{theorem}
    Let $(E, \lVert . \rVert)$ be a two-dimensional real normed space with unit sphere $S = \{x \in E, \lVert x \rVert = 1\}$. Then we have
    $$\min_{y \in \text{S}}{\max_{x \in \text{S}}{\lVert x - y \rVert + \lVert x + y \rVert}} \leq 3$$
    and equality if and only if S is a parallelogram or an affine regular hexagon.
\end{theorem}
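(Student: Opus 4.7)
The plan is to reduce Theorem 3.2 to Theorem 3.1 in three stages: the inequality, the easy ``if'' direction of the equality characterization, and the harder ``only if'' direction.

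For the inequality $\min_{y\in S} f(y)\le 3$, I fix any $y\in S$, use the recipe at the end of Section 1 to construct an affine regular hexagon inscribed in $S$ with first vertex $v_1 = y$, and invoke Theorem 3.1 to produce an index $i$ with $f(v_i)\le 3$. Since $v_i\in S$ this already gives $\min_{y\in S} f(y)\le f(v_i)\le 3$. The ``if'' direction of the equality statement is the content of the two examples in the introduction: the parallelogram yields $f(S) = [3,4]$ and the affine regular hexagon yields $f\equiv 3$, both giving $\min f = 3$. Both computations reduce to elementary norm evaluations and I would record them as a short lemma.

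For the ``only if'' direction, assume $\min_{y\in S} f(y) = 3$, so $f(y)\ge 3$ for every $y\in S$, and set
$$M = \{y\in S : f(y) = 3\}.$$
Since $f$ is continuous and $f(-y) = f(y)$, the set $M$ is closed and centrally symmetric. Combining $f\ge 3$ with Theorem 3.1 gives the key \emph{propagation property}: for every affine regular hexagon $\{\pm v_1,\pm v_2,\pm v_3\}$ inscribed in $S$, at least one pair $\pm v_i$ lies in $M$. I then split into two cases. In Case A, $M = S$, so $f\equiv 3$; then the estimate $\min_i f(v_i)\le 3$ of Theorem 3.1 is an equality for \emph{every} inscribed hexagon, and reading off the resulting triangle-inequality rigidity for all starting vertices $v_1\in S$ should pin $S$ down as an affine regular hexagon. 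In Case B, $M\subsetneq S$; picking $v_1$ in the non-empty open set $U = S\setminus M$ forces $v_2\in M$ or $v_3\in M$ via the propagation property. Using continuous dependence of the hexagon on its first vertex, the symmetry $M = -M$, and the configuration of Example 1, I would argue that $M$ must be finite and in fact equal to the four-point set of the parallelogram example, forcing $S$ itself to be a parallelogram.

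The genuinely hard step is Case B. Translating the combinatorial propagation property into the rigidity conclusion ``$S$ is a parallelogram'' has to cope with two subtleties: the choice of $v_2$ at $\|\cdot\|$-distance $1$ from $v_1$ is not unique once $S$ has flat edges (which is precisely the parallelogram situation), so $v_1\mapsto v_2$ is only a continuous selection on generic starting vertices; and $M$ a priori need not be connected, so the propagation argument must be run component by component while respecting the antipodal symmetry $M = -M$. Case A is conceptually cleaner, being a pure equality-case analysis of Theorem 3.1, but still requires unpacking exactly which triangle inequalities drive that proof and ruling out all ``intermediate'' unit spheres between a parallelogram and a regular hexagon.
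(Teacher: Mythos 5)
Your reduction of the inequality to Theorem 3.1 and your treatment of the ``if'' direction via the two examples in the introduction are exactly what the paper does, and are fine. The problem is the ``only if'' direction: what you have written there is a plan with the two hardest steps left as declarations of intent, and the route you sketch does not obviously close.

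Concretely: in your Case A you propose to ``read off the triangle-inequality rigidity'' from Theorem 3.1, but Theorem 3.1 as stated carries no equality clause at all. To extract rigidity you must re-enter its proof: locate maximizers $z_i$ of $x \mapsto \lVert x - v_i\rVert + \lVert x + v_i\rVert$, dispose of the sub-cases where some $z_i$ lies in $[v_1,v_2]\cup[v_3,-v_1]$ (Lemma 4.2 and its equality characterization), pass to the polygonal norm $\lVert\cdot\rVert_0$, and invoke the equality case of Lemma 4.4 ($a_1=a_2=a_3$, $b_1=b_2=b_3$, $a_i+b_{i+2}=1$) to conclude that $S$ is the hexagon $\{\pm z_1,\pm z_2,\pm z_3\}$. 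None of this is ``reading off''; it is the content of case 4 of the paper's proof. In your Case B you assert that $M$ ``must be finite and in fact equal to the four-point set of the parallelogram example,'' with no argument; and even granted that, $f(y)=3$ at four points does not by itself constrain the shape of $S$ without the equality analysis of Lemma 4.2. You have also correctly identified that the selection $v_1\mapsto v_2$ fails to be single-valued exactly in the parallelogram case, which undermines the continuity argument you intend to run on components of $M$. The paper avoids all of this by never varying the hexagon continuously: it fixes one inscribed hexagon, splits on how many of the arcs $[v_1,v_2]$, $[v_2,v_3]$, $[v_3,-v_1]$ are line segments, and in the mixed cases derives a contradiction with $f\geq 3$ by perturbing a test point ($u$ on $\overline{(-v_3)v_1}$ near $v_1$, or $w$ on $\overline{v_2 v_3}$ near $v_2$) so that the maximum drops strictly below $3$. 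That perturbation mechanism, together with the equality clause of Lemma 4.2, is the missing engine of your argument; without it, both of your cases remain conjectural.
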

\begin{remark}
Let $(E, \left\|.\right\|)$ be a n-dimensional real normed space with unit sphere
$S = \left\{x \in E, \left\|x\right\| = 1\right\}$.
It is easy to check, that for
\begin{eqnarray*}
(E, \left\|.\right\|) = (\mathbb{R}^n, \left\|.\right\|_1)
\end{eqnarray*}
($\left\|.\right\|_1$ denotes the usual 1-norm) we get
\begin{eqnarray*}
\min_{y \in S}\max_{x \in S} \left\|x - y\right\| + \left\|x + y\right\| = 4 - \frac{2}{n}.
\end{eqnarray*}
Furthermore recall the well known fact, that each two-dimensional
real normed space is $L^1$-embeddable, i.e. isometrically isomorphic
to a subspace of $L^1\left[0,1\right]$.\\\newline We conjecture (at
least for $L^1$-embeddable) n-dimensional real normed spaces $(E,
\left\|.\right\|)$, that
\begin{eqnarray*}
\min_{y \in S}\max_{x \in S} \left\|x - y\right\| + \left\|x + y\right\| \leq 4 - \frac{2}{n}
\end{eqnarray*}
holds.
\end{remark}
\section{The proofs}
First we recall the following result:\\\newline
Fix some $v_1$ on the unit sphere $S$ of a two-dimensional real normed space $(E, \lVert . \rVert)$. The value $\lVert x - v_1 \rVert$ is non decreasing as $x$ moves on the unit sphere from $v_1$ to $-v_1$. This result is known as the so called monotonicity lemma.\\\newline
A generalization of the monotonicity lemma is given by
\begin{lemma}
    {\normalfont(= Proposition 31 in \cite{3})}\\
    Let $(E, \lVert . \rVert)$ be a two-dimensional real normed space with unit sphere \\
    $S = \{x \in E, \lVert x \rVert = 1\}$.\\
    Let $x_1, x_2, x_3 \neq 0, \ x_1 \neq x_3$, such that the halfline $\{\lambda x_2, \lambda \geq 0\}$ lies between the halflines $\{\lambda x_1, \lambda \geq 0\}$ and $\{\lambda x_3, \lambda \geq 0\}$, and suppose that $\lVert x_2 \rVert = \lVert x_3 \rVert$.\\
    Then $\lVert x_1 - x_2 \rVert \leq \lVert x_1 - x_3 \rVert$, with equality if and only if either
    \begin{enumerate}
        \item $x_2 = x_3$
        \item or $0$ and $x_2$ are on opposite sides of the line through $x_1$ and $x_3$, and $\overline{(x_3 - x_1)/\lVert x_3 - x_1 \rVert \ x_2/\lVert x_2 \rVert}$ is a segment on S,
        \item or $0$ and $x_2$ are on the same side of the line through $x_1$ and $x_3$, and $\overline{(x_3 - x_1)/\lVert x_3 - x_1 \rVert \ (-x_3)/\lVert x_3 \rVert}$ is a segment on S.
    \end{enumerate}
\end{lemma}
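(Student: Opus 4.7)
This lemma extends the classical monotonicity lemma (recalled just above its statement) to a fixed point $x_1$ that need not lie on $S$. My plan is to first normalise, then rephrase the inequality as the containment of an arc of $S$ inside a translated-dilated unit ball centred at $x_1$; the equality cases then correspond to when this containment degenerates to a flat piece of $S$.

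Both the hypotheses and the inequality are invariant under the common scaling $(x_1,x_2,x_3)\mapsto(\alpha x_1,\alpha x_2,\alpha x_3)$ with $\alpha>0$, so taking $\alpha=1/\lVert x_2\rVert$ reduces to $\lVert x_2\rVert=\lVert x_3\rVert=1$. Then $x_2,x_3\in S$ and $x_2$ lies on the shorter arc from $x_3$ to $\hat x_1:=x_1/\lVert x_1\rVert$. Setting $r:=\lVert x_1-x_3\rVert$ and $K:=x_1+rB$ with $B$ the closed unit ball, one has $x_3\in\partial K$ by construction and also $\hat x_1\in K$, since $\lVert x_1-\hat x_1\rVert=|\lVert x_1\rVert-1|\leq\lVert x_1-x_3\rVert$ by the reverse triangle inequality. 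The inequality $\lVert x_1-x_2\rVert\leq r$ is then equivalent to $x_2\in K$.

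So the problem reduces to showing that the whole short arc from $x_3$ to $\hat x_1$ lies inside $K$. Since the chord $\overline{\hat x_1 x_3}$ lies in $K$ by convexity, and the short arc lies on the side of this chord away from $0$ (every ray from $0$ through the angular wedge hits the chord first because the chord has norm at most $1$, then the arc), we need the arc, which curves outward from the chord, not to escape $K$. For this I would argue by a local analysis at $x_3$: any support line of $K$ at $x_3$ also supports the short arc on the $K$-side there, because the classical monotonicity lemma applied at the sphere-point $x_3$ controls how the arc leaves $x_3$, and the direction toward $\hat x_1$ is forced to head into $K$ by the angular hypothesis. Convexity of $K$ then prevents the arc from exiting $K$ before reaching $\hat x_1$, which is already in $K$.

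The main obstacle is the equality analysis. Equality $\lVert x_1-x_2\rVert=r$ forces $x_2\in\partial K$, so $\partial B$ and $\partial K$ meet at both $x_3$ and $x_2$ while the arc stays in $K$; for two boundaries of planar convex bodies this two-point contact forces a common straight segment joining $x_3$ and $x_2$, producing a flat piece on $S$ of prescribed orientation. Splitting on whether $0$ and $x_2$ lie on the same side of the line through $x_1$ and $x_3$ or on opposite sides then produces exactly the three listed alternative conditions (with case (1) being the trivial $x_2=x_3$). This case-splitting, together with the local direction argument in the previous paragraph, is where I expect Proposition~31 of~\cite{3} to do most of its work.
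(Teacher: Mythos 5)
The paper offers no proof of this lemma to compare against: it is imported verbatim as Proposition~31 of the Martini--Swanepoel--Wei{\ss} survey \cite{3}. Judged on its own terms, your proposal has a genuine gap exactly where the work has to happen. After your (correct) normalisation and the observation that $\lVert x_1-x_2\rVert\le r$ is equivalent to $x_2\in K:=x_1+rB$, the assertion ``the whole short arc from $x_3$ to $\hat x_1$ lies inside $K$'' is not a reduction of the lemma but a restatement of it, so everything rests on the paragraph that argues this. That paragraph does not hold up. First, the claim that a support line of $K$ at $x_3$ locally supports the arc is only asserted: the classical monotonicity lemma concerns distances measured from a point \emph{of} $S$, and it is not explained how it controls the position of $\partial B$ relative to $\partial(x_1+rB)$ near $x_3$ when $x_1$ need not lie on $S$ (the relevant support lines of $K$ at $x_3$ are support lines of $B$ at the \emph{different} point $(x_3-x_1)/r$ of $S$). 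Second, and fatally, ``convexity of $K$ then prevents the arc from exiting $K$ before reaching $\hat x_1$'' is a non sequitur: a curve that starts on $\partial K$, initially heads into $K$, and ends at a point of $K$ can perfectly well leave $K$ and return in between. To close this you would need a genuinely global fact --- e.g.\ that the boundaries of two positive homothets of a planar convex body meet in at most two connected components --- which you neither state nor prove. The equality discussion inherits the same defect in sharper form: ``for two boundaries of planar convex bodies this two-point contact forces a common straight segment'' is false in general (two circles of different radii crossing at two points share no segment), and you explicitly defer the case analysis to Proposition~31 of \cite{3}, i.e.\ to the statement being proved.

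For what it is worth, there is a short self-contained route that avoids all of this. With $\lVert x_2\rVert=\lVert x_3\rVert=1$, let $p=\mu x_2$ (some $\mu\ge 0$) be the point where the ray through $x_2$ meets the segment $\overline{x_1 x_3}$; such a $p$ exists by the betweenness hypothesis. Then $\lVert p-x_2\rVert=\lvert\mu-1\rvert$ and the reverse triangle inequality gives $\lVert p-x_3\rVert\ge\bigl|\lVert p\rVert-\lVert x_3\rVert\bigr|=\lvert\mu-1\rvert$, whence $\lVert x_1-x_2\rVert\le\lVert x_1-p\rVert+\lVert p-x_2\rVert\le\lVert x_1-p\rVert+\lVert p-x_3\rVert=\lVert x_1-x_3\rVert$. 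Tracking when both inequalities are equalities --- the second via the standard characterisation of equality in $\lVert a+b\rVert=\lVert a\rVert+\lVert b\rVert$ by segments on $S$, with the dichotomy $\mu\le 1$ versus $\mu>1$ corresponding precisely to $0$ and $x_2$ lying on opposite versus the same side of the line through $x_1$ and $x_3$ --- produces exactly the three listed alternatives. I would recommend rebuilding the argument along these lines rather than trying to repair the containment-in-$K$ picture.
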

\begin{lemma}
    Let $(E, \lVert . \rVert)$ be a two-dimensional real normed space with unit sphere $S = \{x \in E, \lVert x \rVert = 1\}$.\\
    Consider an affine regular hexagon with vertex set $H = \{\pm v_1, \pm v_2, \pm v_3 \} \subseteq S$ inscribed to S. Further let $x$ be in $[v_1, v_2]$. \\
    Then we have $\lVert x - v_1 \rVert + \lVert x + v_1 \rVert \leq 3$, and equaltity if and only if either
    \begin{enumerate}
        \item $x = v_2$ and $[v_1, v_2] = \overline{v_1 v_2}$ or

        \item $x \in (v_1, v_2)$ and $[v_1, x] = \overline{v_1 x}, \ [x, v_3] = \overline{x v_3}$
    \end{enumerate}
    An analogous result holds for $x$ in $[v_3, -v_1]$.\\
\end{lemma}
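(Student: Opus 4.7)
The inequality follows from two elementary estimates. The triangle inequality gives $\lVert x + v_1 \rVert \leq \lVert x \rVert + \lVert v_1 \rVert = 2$. Since $x \in [v_1, v_2]$, the halfline through $x$ lies between those through $v_1$ and $v_2$, and $\lVert x \rVert = \lVert v_2 \rVert = 1$; applying Lemma 4.1 with $x_1 = v_1$, $x_2 = x$, $x_3 = v_2$ yields $\lVert x - v_1 \rVert \leq \lVert v_2 - v_1 \rVert = 1$. Adding, $\lVert x - v_1 \rVert + \lVert x + v_1 \rVert \leq 3$.

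\textbf{Equality analysis.} Equality in the sum forces both estimates to be tight. From $\lVert x + v_1 \rVert = 2$, the midpoint $(x + v_1)/2$ has norm $1$, so lies on $S$; together with $v_1, x \in S$, convexity of the unit ball forces $\overline{v_1 x} \subseteq S$, so $[v_1, x] = \overline{v_1 x}$. From $\lVert x - v_1 \rVert = \lVert v_2 - v_1 \rVert$, one of the three equality subcases of Lemma 4.1 must hold. The first, $x = v_2$, combined with $[v_1, x] = \overline{v_1 x}$, gives $[v_1, v_2] = \overline{v_1 v_2}$, which is conclusion (1). The second requires $\overline{v_3 x} \subseteq S$: the chord $\overline{v_3 x}$ lies in the positive cone spanned by $x$ and $v_3$, and by convexity the arc $[x, v_3]$ (the intersection of $S$ with that cone, which in our setting passes through $v_2$) must coincide with the chord, so $[x, v_3] = \overline{x v_3}$; together with $x \in (v_1, v_2)$ this gives conclusion (2). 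The third subcase would demand $\overline{v_3 (-v_2)} \subseteq S$; substituting $v_3 = v_2 - v_1$ into $-v_1 = (1-\mu) v_3 + \mu(-v_2)$ gives $-\mu v_1 = (1 - 2\mu) v_2$, which by the linear independence of $v_1, v_2$ forces $\mu = 0$ and $\mu = 1/2$ simultaneously, a contradiction. Hence $-v_1 \notin \overline{v_3 (-v_2)}$, but $-v_1$ does lie on the arc of $S$ joining $v_3$ and $-v_2$, so the chord cannot coincide with that arc, ruling out the third subcase.

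\textbf{Converse and symmetric case.} Each of the conclusions (1) and (2) realizes equality in both component estimates directly, since the listed flatness of $S$ produces $\lVert x + v_1 \rVert = 2$ and $\lVert x - v_1 \rVert = 1$. The analogous statement for $x \in [v_3, -v_1]$ is obtained by the same argument with the roles of $v_1$ and $-v_1$ interchanged: one bounds $\lVert x + v_1 \rVert = \lVert x - (-v_1) \rVert \leq \lVert v_3 + v_1 \rVert = \lVert v_2 \rVert = 1$ via monotonicity and $\lVert x - v_1 \rVert \leq 2$ via the triangle inequality. The main obstacle is the careful unpacking of the three equality subcases of Lemma 4.1, most notably the arithmetic exclusion of the third subcase and the convexity argument forcing $[x, v_3] = \overline{x v_3}$ in the second subcase.
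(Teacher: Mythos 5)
Your proof is correct and follows essentially the same route as the paper: the triangle inequality gives $\lVert x+v_1\rVert\leq 2$, Lemma 4.1 gives $\lVert x-v_1\rVert\leq\lVert v_2-v_1\rVert=1$, and the equality discussion reduces to the equality cases of Lemma 4.1 plus the convexity fact that $\lVert x+v_1\rVert=2$ forces $\overline{v_1x}\subseteq S$. You are in fact somewhat more thorough than the paper, which leaves the exclusion of the third equality subcase of Lemma 4.1 and the converse direction implicit.
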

\begin{proof} Of course we can assume, that $x$ is in $(v_1, v_2]$:
\begin{itemize}
    \item $x = v_2$ leads to
    $$\lVert x - v_1 \rVert + \lVert x + v_1 \rVert = 1 + \lVert v_2 + v_1 \rVert \leq 3$$
    and equality if and only if $\lVert v_2 + v_1 \rVert = 2$, but then we get $[v_1, v_2] = \overline{v_1 v_2}$.

    \item $x \in (v_1, v_2)$\\
    For $x_1 = v_1, x_2 = x$ and $x_3 = v_2$ Lemma 4.1 shows, that \\
    $\lVert v_1 - x \rVert \leq \lVert v_1 - v_2 \rVert = 1$ and hence $\lVert x - v_1 \rVert + \lVert x + v_1 \rVert \leq 3$.\\
    \newline
    If $\lVert x - v_1 \rVert + \lVert x + v_1 \rVert = 3$ we get $\lVert v_1 - x \rVert = 1$ and $\lVert x + v_1 \rVert = 2$.\\
    $\lVert x + v_1 \rVert = 2$ leads to $[v_1, x] = \overline{v_1 x}$ and by Lemma 4.1, part 2 we have $[x v_3] = \overline{x v_3}$.
\end{itemize}
\end{proof}

\begin{lemma}
     Let $a_1, a_2, a_3, b_1, b_2, b_3$ be real numbers and \\
     set $a_7 = a_4 = a_1, a_6 = a_3, a_5 = a_2, b_7 = b_4 = b_1, b_6 = b_3$ and $b_5 = b_2$.\\
     Assume that $0 < a_i < 1, \ 0 < b_i < 1$ and $a_i + b_{i+2} \leq 1$, for all $i = 1,2,3$.\\
     Further for $i = 1,2,3$ let
     $$s_i = -\frac{a_{i+1}}{a_i} - \frac{b_{i+2}}{b_i} + a_{i+1} + b_{i+2} + 1$$
     $$t_i = -\frac{a_{i+2}}{b_i} - \frac{a_{i+2}}{a_i} - \frac{a_{i+1}}{a_i} + a_{i+1} + 1$$
     $$u_i = -\frac{b_{i+2}}{b_i} - \frac{b_{i+1}}{b_i} - \frac{b_{i+1}}{a_i} + b_{i+2} + 1$$
     $$v_i = -\frac{a_{i+2}}{b_i} - \frac{b_{i+1}}{b_i} - \frac{a_{i+2}}{a_i} - \frac{b_{i+1}}{a_i} + 1$$
     $$(s_5 = s_2, s_4 = s_1, \dotsc , v_5 = v_2, v_4 = v_1)$$
     Then we have
     \begin{enumerate}
        \item $\min{(s_1,s_2,s_3)} \leq 0$ and equality if and only if $a_1 = a_2 = a_3, \ b_1 = b_2 = b_3$ and $a_i + b_{i+2} = 1$, for all $i = 1,2,3$.

        \item $\min{(t_1,t_2,t_3)} < 0$

        \item $\min{(u_1,u_2,u_3)} < 0$\\
        \newline
        For all $i = 1,2,3$ we have:

        \item $\min_{i}{(v_i, s_{i+1}, s_{i+2})} < 0$

        \item $\min_{i}{(s_i, t_{i+1})} < 0$

        \item $\min_{i}{(s_i, u_{i+2})} < 0$

        \item $\min_{i}{(v_i, u_{i+2})} < 0$

        \item $\min_{i}{(v_i, t_{i+1})} < 0$

        \item $\min_{i}{(u_i, t_{i+1})} < 0$
     \end{enumerate}
\end{lemma}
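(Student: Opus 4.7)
My plan is to split the nine statements into two groups: parts 1--3 will follow from summing each triple of quantities and invoking the arithmetic-geometric-mean inequality, while parts 4--9 will each be proved by contradiction using a uniform ``squeeze'' argument, carried out once for every value of $i$.

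For part 1 I would compute $s_1+s_2+s_3 = 3 + \sum_i a_i + \sum_i b_i - \sum_i a_{i+1}/a_i - \sum_i b_{i+2}/b_i$, bound each cyclic ratio sum by $3$ via AM--GM (with equality iff the $a_i$, respectively $b_i$, are constant), and then use $\sum_i(a_i+b_{i+2})\leq 3$ from the hypotheses to conclude $s_1+s_2+s_3\leq 0$. The equality characterisation follows because $\min s_i=0$ together with $\sum s_i\leq 0$ forces $s_1=s_2=s_3=0$, hence equality in both AM--GM steps and in each hypothesis. Parts 2 and 3 use the same summation trick: for $t$ the remaining cross term $a_{i+2}/b_i$ satisfies $a_{i+2}/b_i > a_{i+2}$ strictly (since $b_i<1$), which already drives $\sum t_i < 0$, and part 3 is symmetric.

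For each of parts 4--9 and each $i\in\{1,2,3\}$, I would assume for contradiction that every quantity listed in the relevant $\min$ is $\geq 0$, and then extract a lower bound and an upper bound on a single auxiliary variable that visibly collide. The recipe is always the same: drop the nonnegative terms in each $\geq 0$ inequality that do not involve the variable of interest, using $a_j<1$ or $b_j<1$ to make each drop sharp, then read off bounds that chain together. For example, part 4 at $i=1$: $s_2\geq 0$ combined with $a_2<1$ gives $b_2\geq b_1/(1+b_1)$, while $v_1\geq 0$ gives $b_2\leq a_1b_1/(a_1+b_1)$; comparing the two forces $a_1\geq 1$, contradicting $a_1<1$. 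Part 5 at $i=1$ squeezes $a_1$ between $a_2/(1+a_2)$ (from $s_1\geq 0$) and $a_2b_2/(a_2+b_2)$ (from $t_2\geq 0$), forcing $b_2\geq 1$. The remaining parts 6--9 follow the identical template under cyclic relabelling; it is worth noting that the hypotheses $a_i+b_{i+2}\leq 1$ are in fact not used at all in parts 2--9, only in part 1.

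The main obstacle is organisational rather than conceptual: nine sub-statements each requiring three cyclic sub-cases amount to on the order of two dozen elementary squeezes, and in each one must correctly identify which nonnegative terms in the two assumed inequalities to discard so that the resulting bounds actually collide. Once the template \emph{lower bound $\leq$ upper bound forces some $a_k$ or $b_k\geq 1$} is in place, every sub-case reduces to a two-line computation; the real work is pattern-matching the right terms to drop.
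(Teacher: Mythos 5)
Your proposal is correct and follows essentially the same route as the paper: parts 1--3 by summing and applying the AM--GM inequality exactly as here, and parts 4--9 by showing the two (or three) quantities cannot simultaneously be nonnegative after dropping terms via $a_j<1$, $b_j<1$ (the paper phrases this as ``either the first is $<0$ or a bound holds that forces the second $<0$,'' which is the same contradiction in contrapositive form). Your specific squeezes check out for all of parts 4--9, and your observation that $a_i+b_{i+2}\leq 1$ is then not needed outside part 1 is a genuine (minor) sharpening, since the paper's argument for part 4 does invoke $a_{i+2}+b_{i+1}\leq 1$ in its final step.
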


\begin{proof}
ad1. $$3\min{(s_1,s_2,s_3)} \leq s_1+s_2+s_3 = $$
$$= 3 - \left( \frac{a_2}{a_1} + \frac{a_3}{a_2} + \frac{a_1}{a_3} \right) - \left( \frac{b_3}{b_1} + \frac{b_1}{b_2} + \frac{b_2}{b_3} \right) + (a_1 + b_3) + (a_2 + b_1) + (a_3 + b_2) \leq$$
$$\leq 6 - \left( \frac{a_2}{a_1} + \frac{a_3}{a_2} + \frac{a_1}{a_3} \right) - \left( \frac{b_3}{b_1} + \frac{b_1}{b_2} + \frac{b_2}{b_3} \right) \leq$$
$$\leq 6 - 3 - 3 = 0,$$
by the geometric-arithmetric inequality.\\
Moreover we have equality if and only if $a_1 + b_3 = a_2 + b_1 = a_3 + b_2 = 1$ and $\frac{a_2}{a_1} = \frac{a_3}{a_2} = \frac{a_1}{a_3}$ and $\frac{b_3}{b_1} = \frac{b_1}{b_2} = \frac{b_2}{b_3}$ i.e. $a_1 + b_3 = a_2 + b_1 = a_3 + b_2$, \\
$a_1 = a_2 = a_3$ and $b_1 = b_2 = b_3$.\\
\newline
\noindent ad2. $$3\min{(t_1,t_2,t_3)} \leq t_1+t_2+t_3 =$$
$$3 - \frac{a_3}{b_1} - \frac{a_1}{b_2} - \frac{a_2}{b_3} + a_1 + a_2 + a_3 - \left( \frac{a_3}{a_1} + \frac{a_1}{a_2} + \frac{a_2}{a_3} \right) - \left( \frac{a_2}{a_1} + \frac{a_3}{a_2} + \frac{a_1}{a_3} \right) \leq$$
$$\leq -3 - a_1 \left(\frac{1}{b_2} - 1 \right) - a_2 \left(\frac{1}{b_3} - 1 \right) - a_3 \left(\frac{1}{b_1} - 1 \right) < 0,$$
again by the geometric-arithmetric inequality.\\
\newline
\noindent ad3. $$3\min{(u_1,u_2,u_3)} \leq u_1+u_2+u_3 \leq$$
$$\leq -3 - b_1 \left(\frac{1}{a_3} - 1 \right) - b_2 \left(\frac{1}{a_1} - 1 \right) - b_3 \left(\frac{1}{a_2} - 1 \right) < 0, \text{ as in 2.}$$
\newline
\noindent
ad4.
\begin{equation*}
    \begin{split}
    s_{i+1}& = - \frac{a_{i+2}}{a_{i+1}} - \frac{b_{i+3}}{b_{i+1}} + a_{i+2} + b_{i+3} + 1 \leq\\
                 & \leq -a_{i+2} - \frac{b_{i+3}}{b_{i+1}} + a_{i+2} + b_{i+3} + 1 =\\
                 & = 1 - b_{i+3} \left(\frac{1}{b_{i+1}} - 1 \right) = 1 - b_i \left(\frac{1}{b_{i+1}} - 1 \right)
    \end{split}
\end{equation*}
\newline
\noindent
\begin{equation*}
    \begin{split}
    s_{i+2}& = - \frac{a_{i+3}}{a_{i+2}} - \frac{b_{i+4}}{b_{i+2}} + a_{i+3} + b_{i+4} + 1 \leq\\
                 & \leq - \frac{a_{i+3}}{a_{i+2}} - b_{i+4} + a_{i+3} + b_{i+4} + 1 =\\
                 & = 1 - a_{i+3} \left(\frac{1}{a_{i+2}} - 1 \right) = 1 - a_i \left(\frac{1}{a_{i+2}} - 1 \right)
    \end{split}
\end{equation*}
\newline
\noindent
If $b_i \left(\frac{1}{b_{i+1}} - 1 \right) > 1$ or $a_i \left(\frac{1}{a_{i+2}} - 1 \right) > 1$ we have $$\min{(v_i,s_{i+1},s_{i+2})} \leq \min{(s_{i+1},s_{i+2})} < 0.$$
\noindent
So assume, that $\frac{1}{b_i} \geq \frac{1}{b_{i+1}} - 1$ and $\frac{1}{a_i} \geq \frac{1}{a_{i+2}} - 1$.\\
Now
\begin{equation*}
    \begin{split}
    v_i& = -\frac{a_{i+2}}{b_i} - \frac{b_{i+1}}{b_i} - \frac{a_{i+2}}{a_i} - \frac{b_{i+1}}{a_i} + 1 =\\
         & = -\frac{1}{b_i}(a_{i+2} + b_{i+1}) - \frac{1}{a_i}(a_{i+2} + b_{i+1}) + 1 \leq\\
         & \leq - \left( \frac{1}{b_{i+1}} - 1 \right)(a_{i+2} + b_{i+1}) - \left( \frac{1}{a_{i+2}} - 1 \right)(a_{i+2} + b_{i+1}) + 1 =\\
         & = - \left( \frac{1}{b_{i+1}} - 1 \right)a_{i+2} - \left( \frac{1}{a_{i+2}} - 1 \right)b_{i+1} + (a_{i+2} + b_{i+1}) - 1 \leq\\
         & \leq - \left( \frac{1}{b_{i+1}} - 1 \right)a_{i+2} - \left( \frac{1}{a_{i+2}} - 1 \right)b_{i+1} < 0.
    \end{split}
\end{equation*}
\newline
\noindent
ad5.
\begin{equation*}
    \begin{split}
    s_i& = -\frac{a_{i+1}}{a_i} - \frac{b_{i+2}}{b_i} + a_{i+1} + b_{i+2} + 1 \leq\\
         & \leq -\frac{a_{i+1}}{a_i} - b_{i+2} + a_{i+1} + b_{i+2} + 1 = \\
         & = - a_{i+1} \left(\frac{1}{a_i} - 1 \right) + 1
    \end{split}
\end{equation*}
\newline
\noindent
If $\frac{1}{a_{i+1}} < \frac{1}{a_i} - 1$ we get $\min{(s_i,t_{i+1})} \leq s_i < 0$, so assume that $\frac{1}{a_{i+1}} \geq \frac{1}{a_i} - 1$.\\
Now
\begin{equation*}
    \begin{split}
    t_{i+1}& = -\frac{a_i}{b_{i+1}} - \frac{a_i}{a_{i+1}} - \frac{a_{i+2}}{a_{i+1}} + a_{i+2} + 1 \leq\\
                 & \leq -\frac{a_i}{b_{i+1}} - a_i \left(\frac{1}{a_i} - 1 \right) - \frac{a_{i+2}}{a_{i+1}} + a_{i+2} + 1 =\\
                 & = - a_i \left(\frac{1}{b_{i+1}} - 1 \right) - a_{i+2} \left(\frac{1}{a_{i+1}} - 1 \right) < 0
    \end{split}
\end{equation*}
\newline
\noindent
ad6.
\begin{equation*}
    \begin{split}
    s_i& = -\frac{a_{i+1}}{a_i} - \frac{b_{i+2}}{b_i} + a_{i+1} + b_{i+2} + 1 \leq\\
         & \leq -a_{i+1} - \frac{b_{i+2}}{b_i} + a_{i+1} + b_{i+2} + 1 = \\
         & - b_{i+2} \left( \frac{1}{b_i} - 1 \right) + 1
    \end{split}
\end{equation*}
\newline
\noindent
If $\frac{1}{b_{i+2}} < \frac{1}{b_i} - 1$ we get $\min{(s_i,u_{i+2})} \leq s_i < 0$, so assume that $\frac{1}{b_{i+2}} \geq \frac{1}{b_i} - 1$. As in 5. we obtain
$$u_{i+2} \leq -b_{i+1} \left( \frac{1}{b_{i+2}} - 1 \right) - b_i \left(\frac{1}{a_{i+2}} - 1 \right) < 0.$$
\newline
\noindent
ad7.
\begin{equation*}
    \begin{split}
    u_{i+2}& = -\frac{b_{i+1}}{b_{i+2}} - \frac{b_i}{b_{i+2}} - \frac{b_i}{a_{i+2}} + b_{i+1} + 1 \leq\\
                 & \leq -b_{i+1} - \frac{b_i}{b_{i+2}} - \frac{b_i}{a_{i+2}} + b_{i+1} + 1 =\\
                 & = -b_i \left(\frac{1}{a_{i+2}} + \frac{1}{b_{i+2}} \right) + 1
    \end{split}
\end{equation*}
\newline
\noindent
If $\frac{1}{b_i} < \frac{1}{a_{i+2}} + \frac{1}{b_{i+2}}$, we get $u_{i+2} < 0$, so assume that $\frac{1}{b_i} \geq \frac{1}{a_{i+2}} + \frac{1}{b_{i+2}}$.\\
Now
\begin{equation*}
    \begin{split}
    v_i& = -\frac{a_{i+2}}{b_i} - \frac{b_{i+1}}{b_i} - \frac{a_{i+2}}{a_i} - \frac{b_{i+1}}{a_i} + 1 \leq\\
         & \leq - \left(\frac{1}{a_{i+2}} + \frac{1}{b_{i+2}} \right) a_{i+2} - \frac{b_{i+1}}{b_i} - \frac{a_{i+2}}{a_i} - \frac{b_{i+1}}{a_i} + 1 =\\
         & = -\frac{a_{i+2}}{b_{i+2}} - \frac{b_{i+1}}{b_i} - \frac{a_{i+2}}{a_i} - \frac{b_{i+1}}{a_i} < 0.
    \end{split}
\end{equation*}
\newline
\noindent
ad8.
\begin{equation*}
    \begin{split}
    t_{i+1}& = -\frac{a_i}{b_{i+1}} - \frac{a_i}{a_{i+1}} - \frac{a_{i+2}}{a_{i+1}} + a_{i+2} + 1 \leq\\
                 & \leq -\frac{a_i}{b_{i+1}} - \frac{a_i}{a_{i+1}} - a_{i+2} + a_{i+2} + 1 =\\
                 & = -a_i \left( \frac{1}{a_{i+1}} + \frac{1}{b_{i+1}} \right) + 1
    \end{split}
\end{equation*}
\newline
\noindent
If $\frac{1}{a_i} < \frac{1}{a_{i+1}} + \frac{1}{b_{i+1}}$, we get $t_{i+1} < 0$, so assume that $\frac{1}{a_i} \geq \frac{1}{a_{i+1}} + \frac{1}{b_{i+1}}$.\\
As in 7. we obtain
$$v_i \leq -\frac{a_{i+2}}{b_i} - \frac{b_{i+1}}{b_i} - \frac{a_{i+2}}{a_i} - \frac{b_{i+1}}{a_{i+1}} < 0$$
\newline
\noindent
ad9.
\begin{equation*}
    \begin{split}
    u_i& = -\frac{b_{i+2}}{b_i} - \frac{b_{i+1}}{b_i} - \frac{b_{i+1}}{a_i} + b_{i+2} + 1 \leq\\
         & \leq -b_{i+2} - b_{i+1} - \frac{b_{i+1}}{a_i} + b_{i+2} + 1 =\\
         & = -b_{i+1} \left(\frac{1}{a_i} + 1 \right) + 1
    \end{split}
\end{equation*}
\begin{equation*}
    \begin{split}
    t_{i+1}& = -\frac{a_i}{b_{i+1}} - \frac{a_i}{a_{i+1}} - \frac{a_{i+2}}{a_{i+1}} + a_{i+2} + 1 \leq\\
                 & \leq -\frac{a_i}{b_{i+1}} - a_i - a_{i+2} + a_{i+2} + 1 =\\
                 & = - a_i \left(\frac{1}{b_{i+1}} + 1 \right) + 1
    \end{split}
\end{equation*}
Assume that $\min{(u_i, t_{i+1})} \geq 0$. Then we get
$$1 > (1 - b_{i+1})(1 - a_i) \geq \frac{b_{i+1}}{a_i} \frac{a_i}{b_{i+1}} = 1,$$
a contradiction.
\end{proof}

\begin{lemma}
Let $a_1, a_2, a_3, b_1, b_2, b_3$ be real numbers and set $a_4 = a_1, a_5 = a_2, b_4 = b_1$ and $b_5 = b_2$. Assume, that $0 < a_i < 1$, $0 < b_i < 1$
and $a_i+b_{i+2}\leq 1$, for all $i = 1,2,3$. Further for $i = 1,2,3$ let
\begin{eqnarray*}
\alpha_i = \frac{1}{a_i+b_i-a_ib_i}(a_i+b_i+(1-b_{i+2})a_i(1-b_i))
\end{eqnarray*}
\begin{eqnarray*}
\overline{\alpha_i} = \frac{1}{a_i+b_i-a_ib_i}((a_i+b_i)(1-a_{i+2})+a_i(1-b_i))
\end{eqnarray*}
\begin{eqnarray*}
\beta_i = \frac{1}{a_i+b_i-a_ib_i}(a_i+b_i+(1-a_{i+1})b_i(1-a_i))
\end{eqnarray*}
\begin{eqnarray*}
\overline{\beta_i} = \frac{1}{a_i+b_i-a_ib_i}((a_i+b_i)(1-b_{i+1})+b_i(1-a_i))
\end{eqnarray*}
\begin{eqnarray*}
(\alpha_4 = \alpha_1, \overline{\alpha_4}=\overline{\alpha_1},
\beta_4 = \beta_1, \overline{\beta_4}=\overline{\beta_1})
\end{eqnarray*}
and
\begin{eqnarray*}
M_i = \max(\alpha_i,\overline{\alpha_i})+\max(\beta_i,\overline{\beta_i}).
\end{eqnarray*}
Then we have
\begin{eqnarray*}
\min_{i} M_i \leq 3,
\end{eqnarray*}
and equality if and only if $a_1 = a_2 = a_3$,$b_1 = b_2 = b_3$ and $a_i+b_{i+2} = 1$, for all $i = 1,2,3$.\\
\end{lemma}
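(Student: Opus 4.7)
The plan is to translate each sum $\max(\alpha_i, \overline{\alpha_i}) + \max(\beta_i, \overline{\beta_i})$ into a sign condition on the four quantities $s_i, t_i, u_i, v_i$ of Lemma 4.3 and then argue combinatorially. The first step is to verify by direct expansion the identities
$\alpha_i + \beta_i - 3 = \tfrac{a_i b_i}{D_i} s_i$,
$\alpha_i + \overline{\beta_i} - 3 = \tfrac{a_i b_i}{D_i} u_i$,
$\overline{\alpha_i} + \beta_i - 3 = \tfrac{a_i b_i}{D_i} t_i$, and
$\overline{\alpha_i} + \overline{\beta_i} - 3 = \tfrac{a_i b_i}{D_i} v_i$,
where $D_i = a_i + b_i - a_i b_i > 0$. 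Using $\max(A,B) + \max(C,D) = \max(A+C, A+D, B+C, B+D)$, these combine to give $M_i - 3 = \tfrac{a_i b_i}{D_i} \max(s_i, t_i, u_i, v_i)$, so the goal reduces to finding an index $i$ at which all four of $s_i, t_i, u_i, v_i$ are $\leq 0$.

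I would argue by contradiction. Assume for every $i$ that at least one of $s_i, t_i, u_i, v_i > 0$, and set $P_X = \{i : X_i > 0\}$, so $P_s \cup P_t \cup P_u \cup P_v = \{1,2,3\}$. Read contrapositively, claims 5, 8, 9 give $i \in P_s \cup P_u \cup P_v \Rightarrow i+1 \notin P_t$ (indices mod 3). Since every index has to be covered, the set $A = P_s \cup P_u \cup P_v$ is cyclically closed under $i \mapsto i+1$, hence $A \in \{\emptyset, \{1,2,3\}\}$; the empty alternative forces $P_t = \{1,2,3\}$, contradicting claim 2. Analogously claims 6, 7 force $C = P_s \cup P_v$ to be closed under $i \mapsto i+2$, so $C = \{1,2,3\}$ (the empty alternative would contradict claim 3).

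The essential extra ingredient, not supplied by Lemma 4.3, is that at most one $v_j$ can be positive: $v_j > 0$ rewrites as $a_{j+2} + b_{j+1} < \tfrac{a_j b_j}{a_j + b_j}$, and the strict bound $\tfrac{ab}{a+b} < \min(a, b)$ for $a, b > 0$ gives $b_{j+1} < a_j$; applying the same at $j+1$ produces $a_j < b_{j+1}$, a contradiction. Hence $|P_v| \leq 1$, and since $P_s \cup P_v = \{1,2,3\}$ we get $|P_s| \geq 2$; claim 1 gives $|P_s| \leq 2$, so $|P_s| = 2$ and $P_v = \{j\}$ for the unique $j \notin P_s$. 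Claim 4 at $i = j$ then requires $s_{j+1} < 0$ or $s_{j+2} < 0$, while both indices lie in $P_s$. Contradiction.

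For the equality statement one reruns the argument with the weak sets $P^+_X = \{i : X_i \geq 0\}$; because Lemma 4.3's conclusions are strict $<0$ and the bound $\tfrac{ab}{a+b} < \min(a,b)$ is strict, the same reductions force $P^+_s \cup P^+_v = \{1,2,3\}$ with $|P^+_v| \leq 1$, and the sub-case $|P^+_s| = 2$ is killed by claim 4 exactly as above, leaving $P^+_s = \{1,2,3\}$. Then $\min_i s_i \geq 0$ combined with claim 1 forces equality in claim 1, i.e., $a_1 = a_2 = a_3$, $b_1 = b_2 = b_3$, $a_i + b_{i+2} = 1$; the converse is a short computation giving $\alpha_i = 1+a$, $\overline{\alpha_i} = 1$, $\beta_i = 1+b$, $\overline{\beta_i} = 1$, hence $M_i = 3$ for all $i$. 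I expect the main obstacle to be spotting the auxiliary inequality $|P_v| \leq 1$, because the configuration $|P_s| = 1$, $|P_v| \geq 2$ is consistent with all nine claims of Lemma 4.3 and only fails for the underlying geometric reason involving $a_i$ and $b_i$ directly.
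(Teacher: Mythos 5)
Your argument is correct, and although it shares the paper's opening move --- the identities $\alpha_i+\beta_i-3=\tfrac{a_ib_i}{a_i+b_i-a_ib_i}\,s_i$, $\overline{\alpha_i}+\beta_i-3=\tfrac{a_ib_i}{a_i+b_i-a_ib_i}\,t_i$, etc., which reduce everything to the quantities of Lemma 4.3 --- the combinatorial half is genuinely different from the paper's. The paper introduces the comparison indicators $\epsilon_i$ (whether $\alpha_i<\overline{\alpha_i}$) and $\delta_i$ (whether $\beta_i<\overline{\beta_i}$), proves the auxiliary incompatibility that $\epsilon_i=\delta_{i+1}=1$ cannot occur (because $\tfrac{a_i+b_i}{a_i(1-b_i)}>1$ and $\tfrac{a_{i+1}+b_{i+1}}{b_{i+1}(1-a_{i+1})}>1$ would force both $a_{i+2}<b_{i+2}$ and $b_{i+2}<a_{i+2}$), and then enumerates the surviving case vectors in $\{0,1\}^6$, matching each family of cases to one of the nine parts of Lemma 4.3. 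You instead work directly with the positivity sets $P_s,P_t,P_u,P_v$, run the cyclic-closure arguments, and supply the auxiliary fact $|P_v|\le 1$ via $v_j>0\iff a_{j+2}+b_{j+1}<\tfrac{a_jb_j}{a_j+b_j}<\min(a_j,b_j)$. Both routes need exactly one ingredient beyond Lemma 4.3, and the two ingredients are not equivalent: yours is a sign constraint on the $v_j$, the paper's is an order constraint linking $\alpha_i$ versus $\overline{\alpha_i}$ at $i$ with $\beta_{i+1}$ versus $\overline{\beta_{i+1}}$ at $i+1$; you are right that Lemma 4.3 alone does not exclude the pattern $|P_s|=1$, $|P_v|=2$, so some such ingredient is unavoidable. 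Your version is shorter and dispenses with the $64$-case bookkeeping; the paper's version keeps visible which of the two candidate expressions for $\lVert z_i\pm v_i\rVert_0$ is active, which is the information the case vectors encode. Your treatment of the equality case via the weak sets $P_X^{+}$, relying on the strictness of parts 2--9 of Lemma 4.3 and on the equality clause of part 1, is also sound, and matches the paper's conclusion that equality forces $a_1=a_2=a_3$, $b_1=b_2=b_3$ and $a_i+b_{i+2}=1$.
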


\begin{proof}
It is easy to see, that $a_1 = a_2 = a_3$,$b_1 = b_2 = b_3$ and
$a_i+b_{i+2} = 1$, for all $i = 1,2,3$, implies $\min_{i} M_i=3$.
\\
For $i = 1,2,3$ let
\begin{eqnarray*}
\epsilon_i = \left\{\begin{array}{ll} 1, & \alpha_i < \overline{\alpha_i} \\
0, & \alpha_i \geq \overline{\alpha_i}\end{array}\right.
\end{eqnarray*}
and
\begin{eqnarray*}
\delta_i = \left\{\begin{array}{ll} 1, & \beta_i < \overline{\beta_i} \\
0, & \beta_i \geq \overline{\beta_i}\end{array}\right.
\end{eqnarray*}
\begin{eqnarray*}
(\epsilon_4 = \epsilon_1,\delta_4 = \delta_1).
\end{eqnarray*}
The definition of $M_i$ leads to several cases, depending on the
values of $\epsilon_i$ and $\delta_i$. In the sequel it is
convenient to define, for given values of $\epsilon_i$ and
$\delta_i$, the corresponding case-vector $c$ by $c =
(\epsilon_1,\delta_1,\epsilon_2,\delta_2,\epsilon_3,\delta_3)$.\\\newline
So for example the case $\alpha_1 < \overline{\alpha_1}$, $\beta_1
\geq \overline{\beta_1}$, $\alpha_2 < \overline{\alpha_2}$,\\
$\beta_2 < \overline{\beta_2}$,
$\alpha_3 < \overline{\alpha_3}$, $\beta_3 \geq \overline{\beta_3}$ is given by $c = (1,0,1,1,1,0)$.\\
Furthermore a vector $C$ in $\left\{0,1,x\right\}^6$ abbreviates the set of cases $c$ in $C$, such that the entries of $c$ and $C$ coincide in all entries unequal
to $x$.\\\newline
So for example $C = (1,0,x,1,0,x)$ is the set of cases $c = (\epsilon_1,\delta_1,\epsilon_2,\delta_2,\epsilon_3,\delta_3)$, such that $\epsilon_1 = 1$, $\delta_1 = 0$,
$\delta_2 = 1$ and $\epsilon_3 = 0$.\\\newline
Assume that $\epsilon_i = \delta_{i+1} = 1$ for some $i = 1,2,3$.  Hence  $\alpha_i < \overline{\alpha_i}$ and $\beta_{i+1} < \overline{\beta_{i+1}}$. \\
$\alpha_i < \overline{\alpha_i}$ implies
\begin{eqnarray*}
\frac{a_i+b_i}{a_i(1-b_i)} < \frac{b_{i+2}}{a_{i+2}},
\end{eqnarray*}
and $\beta_{i+1} < \overline{\beta_{i+1}}$ implies
\begin{eqnarray*}
\frac{a_{i+1}+b_{i+1}}{b_{i+1}(1-a_{i+1})} < \frac{a_{i+2}}{b_{i+2}}.
\end{eqnarray*}
But $\frac{a_i+b_i}{a_i(1-b_i)}$ and $\frac{a_{i+1}+b_{i+1}}{b_{i+1}(1-a_{i+1})}$ are greater then 1 and therefore we would get $a_{i+2} < b_{i+2}$ and $b_{i+2} < a_{i+2}$,
a contradiction.\\\newline
Therefore we have shown, that
\begin{eqnarray*}
C = (1,x,x,1,x,x) = (x,1,x,x,1,x) = (x,x,1,x,x,1) = \emptyset.
\end{eqnarray*}
For further discussion let $s_i$, $t_i$, $u_i$ and $v_i$ $(i = 1,2,3)$ be defined as in Lemma 4.3. It is easy to see, that
\begin{eqnarray*}
\alpha_i + \beta_i - 3 = \frac{a_i b_i}{a_i + b_i - a_i b_i} s_i,
\end{eqnarray*}
\begin{eqnarray*}
\overline{\alpha_i} + \beta_i - 3 = \frac{a_i b_i}{a_i + b_i - a_i b_i} t_i,
\end{eqnarray*}
\begin{eqnarray*}
\alpha_i + \overline{\beta_i} - 3 = \frac{a_i b_i}{a_i + b_i - a_i b_i} u_i,
\end{eqnarray*}
\begin{eqnarray*}
\overline{\alpha_i} + \overline{\beta_i} - 3 = \frac{a_i b_i}{a_i + b_i - a_i b_i} v_i.
\end{eqnarray*}
We have to show, that
\begin{eqnarray*}
\min_{i} \max(s_i, t_i, u_i, v_i) \leq 0
\end{eqnarray*}
and equality if and only if if $a_1 = a_2 = a_3$,$b_1 = b_2 = b_3$ and $a_i+b_{i+2} = 1$, for all $i = 1,2,3$.\\\newline
Interpretating Lemma 4.3 by means of case-vectors were are done in each of the following cases:
\begin{enumerate}
\item $C_1 = \left\{(0,0,0,0,0,0)\right\}$
\item $C_2 = \left\{(1,0,1,0,1,0)\right\}$
\item $C_3 = \left\{(0,1,0,1,0,1)\right\}$
\item $C_4 = \left\{(1,1,0,0,0,0),(0,0,1,1,0,0),(0,0,0,0,1,1)\right\}$
\item $C_5 = (0,0,1,0,x,x)\cup(x,x,0,0,1,0)\cup(1,0,x,x,0,0)$
\item $C_6 = (0,0,x,x,0,1)\cup(0,1,0,0,x,x)\cup(x,x,0,1,0,0)$
\item $C_7 = (1,1,x,x,0,1)\cup(0,1,1,1,x,x)\cup(x,x,0,1,1,1)$
\item $C_8 = (1,1,1,0,x,x)\cup(x,x,1,1,1,0)\cup(1,0,x,x,1,1)$
\item $C_9 = (0,1,1,0,x,x)\cup(x,x,0,1,1,0)\cup(1,0,x,x,0,1)$
\end{enumerate}
To continue let $\widetilde{C}_k$ be the set of all cases, where the number of 1's in the correesponding case vectors is exactly
$k$ ($k \in \left\{0,1, \ldots,6\right\}$).\\\newline
It is a routine to check, that
\begin{eqnarray*}
\widetilde{C}_0 = C_1, \widetilde{C}_1 \subseteq C_5\cup C_6, \widetilde{C}_2 \subseteq C_4\cup C_5\cup C_6\cup C_9
\end{eqnarray*}
and
\begin{eqnarray*}
\widetilde{C}_3 \subseteq C_2\cup C_3\cup C_5\cup C_6\cup C_7\cup C_8\cup C_9.
\end{eqnarray*}
As noted before, we have
\begin{eqnarray*}
(1,x,x,1,x,x) = (x,1,x,x,1,x) = (x,x,1,x,x,1) = \emptyset,
\end{eqnarray*}
which implies
\begin{eqnarray*}
\widetilde{C}_5\cup \widetilde{C}_5\cup \widetilde{C}_6 = \emptyset.
\end{eqnarray*}
Summing up we are done by Lemma 4.3.\\\newline
\end{proof}
\begin{proof}[Proof of Theorem 3.1]
For $i = 1,2,3$ choose points $z_i$ in $S$, such that
\begin{eqnarray*}
\max_{x \in S} \left\|x-v_i\right\|+\left\|x+v_i\right\| =
\left\|z_i-v_i\right\|+\left\|z_i+v_i\right\|.
\end{eqnarray*}
W.l.o.g. let $z_1$ be in $\left[v_1,v_2\right]\cup
\left(v_2,v_3\right)\cup \left[v_3, -v_1\right]$.
If $z_1 \in \left[v_1,v_2\right]\cup \left[v_3, -v_1\right]$ we are done by Lemma 4.2.\\
Hence we can assume, that $z_1 \in \left(v_2,v_3\right)$. The same argument leads to $z_2 \in \left(v_3,-v_1\right)$ and
$z_3 \in \left(-v_1,-v_2\right)$.\\\newline
Therefore there are unique real mubers $x_i, y_i$ $(i = 1,2,3)$, such that\\ $0 < x_i \leq 1$, $0 < y_i \leq 1$, $x_i + y_i \geq 1$ and
\begin{eqnarray*}
z_1 = x_1 v_2 + y_1 v_3,
z_2 = x_2 v_3 + y_2 (-v_1),
z_3 = x_3 (-v_1) + y_3 (-v_2)
\end{eqnarray*}
Now let $B_0$ be the convex hull of $\left\{\pm v_1,\pm v_2,\pm v_3,\pm z_1,\pm z_2,\pm z_3\right\}$. $B_0$ defines a norm $\left\|.\right\|_0$,
such that $B_0 = \left\{x \in E, \left\|x\right\|_0 \leq 1\right\}$.\\
Since $B_0 \subseteq \left\{x \in E, \left\|x\right\| \leq
1\right\}$, we obtain $\left\|x\right\| \leq \left\|x\right\|_0$,
for all $x$  in $E$ and therefore we are done, if we can show
\begin{eqnarray*}
\min_{i} \left\|z_i-v_i\right\|_0+\left\|z_i+v_i\right\|_0 \leq 3.
\end{eqnarray*}
Routine calculations lead to
\begin{eqnarray*}
\left\|z_i-v_i\right\|_0 = \max(x_i+y_i+\frac{1-x_{i+1}}{y_{i+1}}(1-x_i),(x_i+y_i)\frac{1-y_{i+1}}{x_{i+1}}+1-x_i),
\end{eqnarray*}
\begin{eqnarray*}
\left\|z_i+v_i\right\|_0 = \max(x_i+y_i+\frac{1-y_{i+2}}{x_{i+2}}(1-y_i),(x_i+y_i)\frac{1-x_{i+2}}{y_{i+2}}+1-y_i),
\end{eqnarray*}
for $i = 1,2,3$, with $x_4 = x_1$,$x_5 = x_2$,$y_4 = y_1$ and $y_5 = y_2$.\\\newline
Let $H_1$ be the closed halfspace defined by the line through $v_3$ and $z_1$, such that $0 \in H_1$. Since $z_2 \in H_1$ we get
\begin{eqnarray*}
\frac{1-x_2}{y_2} + \frac{1-y_1}{x_1} \geq 1.
\end{eqnarray*}
Note that if $H_2$ denotes the closed halfspace defined by line through $v_3$ and $z_2$, such that $0 \in H_2$, we have $z_2 \in H_1$ if and only if $z_1 \in H_2$.
Hence $z_1 \in H_2$ again leads to
\begin{eqnarray*}
\frac{1-x_2}{y_2} + \frac{1-y_1}{x_1} \geq 1.
\end{eqnarray*}
The same argument (looking at $(-v_1)$ and $(-v_2)$) implies
\begin{eqnarray*}
\frac{1-x_3}{y_3} + \frac{1-y_2}{x_2} \geq 1
\end{eqnarray*}
and
\begin{eqnarray*}
\frac{1-x_1}{y_1} + \frac{1-y_3}{x_3} \geq 1.
\end{eqnarray*}
Assume that $x_1+y_1 = 1$. Therefore $z_1 \in \overline{v_2 v_3}$ and so
\begin{eqnarray*}
\left\|z_1-v_1\right\|_0 +\left\|z_1+v_1\right\|_0 \leq& \max(\left\|v_2-v_1\right\|_0 +\left\|v_2+v_1\right\|_0, \left\|v_3-v_1\right\|_0 +\left\|v_3+v_1\right\|_0)\\=&
\max(1 + \left\|v_2+v_1\right\|_0, \left\|v_3-v_1\right\|_0 + 1)\\\leq& 3.
\end{eqnarray*}
The same argument shows, that $x_i+y_i = 1$ implies
\begin{eqnarray*}
\left\|z_i-v_i\right\|_0+\left\|z_i+v_i\right\|_0 \leq 3,
\end{eqnarray*}
for all $i = 1,2,3$.\\\newline Furthermore assume that $x_1 = 1$ or
$y_1 = 1$.  Since
\begin{eqnarray*}
\frac{1-x_1}{y_1} + \frac{1-y_3}{x_3} \geq 1, \frac{1-y_1}{x_1} + \frac{1-x_2}{y_2} \geq 1
\end{eqnarray*}
and $x_3+y_3 \geq 1$, $x_2+y_2 \geq 1$, we would get $x_3+y_3 = 1$ or $x_2+y_2 = 1$ and hence
\begin{eqnarray*}
\min_{i} \left\|z_i-v_i\right\|_0+\left\|z_i+v_i\right\|_0 \leq 3,
\end{eqnarray*}
as mentioned above.\\
The same argument shows, that $x_2 = 1$ or $y_2 = 1$ or $x_3 = 1$ or $y_3 = 1$ leads to
\begin{eqnarray*}
\min_{i} \left\|z_i-v_i\right\|_0+\left\|z_i+v_i\right\|_0 \leq 3.
\end{eqnarray*}
Summing up it remains to show, that for real numbers $x_i, y_i$ ($i = 1,2,3$), $x_4 = x_1$, $x_5 = x_2$, $y_4 = y_1$,$y_5 = y_2$ with
$0 < x_i < 1$, $0 < y_i < 1$, $x_i + y_i > 1$ and
\begin{eqnarray*}
\frac{1-x_i}{y_i} + \frac{1-y_{i+2}}{x_{i+2}} \geq 1,
\end{eqnarray*}
for all $i = 1,2,3$, we have
\begin{eqnarray*}
\min_i M_i \leq 3,
\end{eqnarray*}
where
\begin{eqnarray*}
M_i = \max (\beta_i, \overline{\beta_i}) + \max (\alpha_i, \overline{\alpha_i})
\end{eqnarray*}
and
\begin{eqnarray*}
\alpha_i = x_i+y_i+\frac{1-y_{i+2}}{x_{i+2}}(1-y_i)
\end{eqnarray*}
\begin{eqnarray*}
\overline{\alpha_i} = (x_i+y_i)\frac{1-x_{i+2}}{y_{i+2}}+1-y_i
\end{eqnarray*}
\begin{eqnarray*}
\beta_i = x_i+y_i+\frac{1-x_{i+1}}{y_{i+1}}(1-x_i)
\end{eqnarray*}
\begin{eqnarray*}
\overline{\beta_i} = (x_i+y_i)\frac{1-y_{i+1}}{x_{i+1}}1-x_i,
\end{eqnarray*}
for $i = 1,2,3$.\\\newline
Finally for $i = 1,2,3$ set
\begin{eqnarray*}
a_i = 1 - \frac{1 - x_i}{y_i}, b_i = 1 - \frac{1 - y_i}{x_i},
\end{eqnarray*}
$a_4 = a_1$, $a_5 = a_2$, $b_4 = b_1$ and $b_5 = b_2$. \\
It follows, that $0 < a_i < 1$, $0 < b_i < 1$ and $a_i + b_{i+2} \leq 1$, for all $i = 1,2,3$.\\
It is easy to check, that
\begin{eqnarray*}
\alpha_i = \frac{1}{a_i+b_i-a_ib_i}(a_i+b_i+(1-b_{i+2})a_i(1-b_i))
\end{eqnarray*}
\begin{eqnarray*}
\overline{\alpha_i} = \frac{1}{a_i+b_i-a_ib_i}((a_i+b_i)(1-a_{i+2})+a_i(1-b_i))
\end{eqnarray*}
\begin{eqnarray*}
\beta_i = \frac{1}{a_i+b_i-a_ib_i}(a_i+b_i+(1-a_{i+1})b_i(1-a_i))
\end{eqnarray*}
\begin{eqnarray*}
\overline{\beta_i} = \frac{1}{a_i+b_i-a_ib_i}((a_i+b_i)(1-b_{i+1})+b_i(1-a_i))
\end{eqnarray*}
for $i = 1,2,3$. \\\newline
Applying Lemma 4.4 we obtain
\begin{eqnarray*}
\min_i M_i \leq 3
\end{eqnarray*}
and hence we are done.\\\newline
\end{proof}
\begin{proof}[Proof of Theorem 3.2]
Consider an affine regular hexagon with vertex  set $H = \left\{\pm v_1, \pm v_2, \pm v_3\right\} \subseteq S$ inscribted to $S$. Applying Theorem 3.1, we get
\begin{eqnarray*}
\min_{y \in S}\max_{x \in S} \left\|x - y\right\|+\left\|x + y\right\| \leq \min_i \max_{x \in S} \left\|x - v_i\right\|+\left\|x + v_i\right\| \leq 3.
\end{eqnarray*}
As noted in section 1, we have
\begin{eqnarray*}
\min_{y \in S}\max_{x \in S} \left\|x - y\right\|+\left\|x + y\right\| = 3,
\end{eqnarray*}
if $S$ is a parallelogram or an affine regular hexagon. Now assume,
that
\begin{eqnarray*}
\min_{y \in S}\max_{x \in S} \left\|x - y\right\|+\left\|x + y\right\| = 3.
\end{eqnarray*}
\begin{enumerate}
\item All three arcs $\left[v_1, v_2\right]$, $\left[v_2, v_3\right]$ and $\left[v_3, -v_1\right]$ are line segments:\\
            therefore $S$ is an affine regular hexagon with vertex set $\left\{\pm v_1, \pm v_2, \pm v_3\right\}$.
\item Exactly two of the three arcs $\left[v_1, v_2\right]$, $\left[v_2, v_3\right]$ and $\left[v_3, -v_1\right]$ are line segments:\\
            w.l.o.g. let $\left[v_2, v_3\right] = \overline{v_2 v_3}$  and $\left[v_3, -v_1\right] = \overline{v_3 (-v_1)}$ and $\left[v_1, v_2\right] \neq \overline{v_1 v_2}$.\\
            For $y$ in $S$ let
            \begin{eqnarray*}
            \alpha(y) = \max_{x \in \left[v_1, v_2\right]}\left\|x - y\right\|+\left\|x + y\right\|
            \end{eqnarray*}
            By convexity we have
            \begin{eqnarray*}
            \max_{x \in S}\left\|x - v_1\right\|+\left\|x + v_1\right\| = \max(\left\|v_3 - v_1\right\|+\left\|v_3 + v_1\right\|,\alpha(v_1))
            \end{eqnarray*}
            By Lemma 4.2 we have $\alpha(v_1) \leq 3$. \\\newline
            If $\alpha(v_1) < 3$, we can choose some point $u \neq v_1$ in $\overline{(-v_3)v_1}$ close to $v_1$, such that $\alpha(u) < 3$.\\
            Since $\left\|v_3 + u\right\| < 1$, we get
            \begin{eqnarray*}
            \max_{x \in S}\left\|x - u\right\|+\left\|x + u\right\| = \max(\left\|v_3 - u\right\|+\left\|v_3 + u\right\|,\alpha(u)) < 3,
            \end{eqnarray*}
            a contradiction to
            \begin{eqnarray*}
            \max_{x \in S} \left\|x - u\right\|+\left\|x + u\right\| \geq 3.
            \end{eqnarray*}
            If $\alpha(v_1) = 3$, Lemma 4.2 again implies either $\left[v_1, v_2\right] = \overline{v_1 v_2}$ or there is some $z \in \left(v_1, v_2\right)$ such that
            $\left[v_1, z\right] = \overline{v_1 z}$ and $\left[z, v_3\right] = \overline{z v_3}$.\\\newline
            Since by assumption $\left[v_1, v_2\right] \neq \overline{v_1 v_2}$ we can choose some $z \in \left(v_1, v_2\right)$, such that
            $\left[v_1, z\right] = \overline{v_1 z}$ and $\left[z, v_3\right] = \overline{z v_3}$. \\
            Of course, we have $\left\|z - v_2\right\| \leq 1$. $\left\|z - v_2\right\| = 1$ implies $z = v_1 + v_2$ and $S$ is a parallelogram with vertex set
            $\left\{\pm v_3, \pm z\right\}$.\\
            So assume, that $\left\|z - v_2\right\| < 1$: \\\newline
            By convexity we have
            \begin{eqnarray*}
            \max_{x \in S} \left\|x - v_2\right\|+\left\|x + v_2\right\| = \max_{x \in \left\{v_1, z, v_3\right\}} \left\|x - v_2\right\|+\left\|x + v_2\right\|.
            \end{eqnarray*}
            Since $\left\|z - v_2\right\|+\left\|z + v_2\right\| < 3$\\
            and $\left\|v_1 - v_2\right\|+\left\|v_1 + v_2\right\| < \left\|v_1 - v_2\right\| + 2 = 3$, we can
            choose some point $w \neq v_2$ in $\overline{v_2 v_3}$ close to $v_2$, such that $\left\|z - w\right\|+\left\|z + w\right\| < 3$ and
            $\left\|v_1 - w\right\|+\left\|v_1 + w\right\| < 3$. But $\left\|v_3 - w\right\|+\left\|v_3 + w\right\| < 1 + \left\|v_3 + w\right\| \leq 3$, and therefore
            \begin{eqnarray*}
            \max_{x \in S} \left\|x - w\right\|+\left\|x + w\right\| < 3,
            \end{eqnarray*}
            a contradiciton to
            \begin{eqnarray*}
            \max_{x \in S} \left\|x - w\right\|+\left\|x + w\right\| \geq 3.
            \end{eqnarray*}
\item Exactly one of the three arcs $\left[v_1, v_2\right]$, $\left[v_2, v_3\right]$ and $\left[v_3, -v_1\right]$ is a line segment:\\
            w.l.o.g. let $\left[v_2, v_3\right] = \overline{v_2 v_3}$, $\left[v_1, v_2\right] \neq \overline{v_1 v_2}$ and $\left[v_3, -v_1\right] \neq \overline{v_3 (-v_1)}$.\\
            By convexity we have
            \begin{eqnarray*}
            \max_{x \in S} \left\|x - v_1\right\|+\left\|x + v_1\right\| = \max_{x \in \left[v_1, v_2\right]\cup \left[v_3, -v_1\right]} \left\|x - v_1\right\|+\left\|x + v_1\right\| \leq 3,
            \end{eqnarray*}
            by Lemma 4.2. Since
            \begin{eqnarray*}
            \max_{x \in S} \left\|x - v_1\right\|+\left\|x + v_1\right\| \geq 3,
            \end{eqnarray*}
            we get
            \begin{eqnarray*}
            \max_{x \in \left[v_1, v_2\right]\cup \left[v_3, -v_1\right]} \left\|x - v_1\right\|+\left\|x + v_1\right\| = 3.
            \end{eqnarray*}
            Therefore we can choose some $z \in \left[v_1, v_2\right]\cup \left[v_3, -v_1\right]$, such that \\$\left\|z - v_1\right\|+\left\|z + v_1\right\| = 3$.\\\newline
            W.l.o.g. let $z \in \left[v_1, v_2\right]$:\\\newline
            If $z = v_2$, Lemma 4.2 implies $\left[v_1, v_2\right] = \overline{v_1 v_2}$, a contradiction to \\$\left[v_1, v_2\right] \neq \overline{v_1 v_2}$.\\\newline
            If $z \in\left(v_1, v_2\right)$, Lemma 4.2 again implies $\left[v_1, z\right] = \overline{v_1 z}$ and $\left[z, v_3\right] = \overline{z v_3}$. Since
            $\left\|z - v_3\right\| > \left\|v_2 - v_3\right\| = 1$, we can find $w \in \overline{v_2 v_3}\setminus\left\{v_3\right\}$, such that $\left\|z - w\right\| = 1.$\\\newline
            Since $w = z-v_1$, $H' = \left\{\pm v_1, \pm z, \pm w\right\}$ is the vertex set of an affine regular hexagon with $\left[v_1, z\right] = \overline{v_1 z}$,\\
            $\left[z, w\right] = \overline{z w}$ and $\left[w, -v_1\right] \neq \overline{w (-v_1)}$ and therefore we are done by case 2 (the affine regular hexagon at the beginning
            of the proof was chosen arbitrarily).\\
\item None of the three arcs $\left[v_1, v_2\right]$, $\left[v_2, v_3\right]$ and $\left[v_3, -v_1\right]$ is a line segment:\\
            Let
            \begin{eqnarray*}
            \alpha_1 = \max_{x \in \left[v_1, v_2\right]\cup \left[v_3, -v_1\right]} \left\|x - v_1\right\|+\left\|x + v_1\right\|,
            \end{eqnarray*}
            \begin{eqnarray*}
            \alpha_2 = \max_{x \in \left[v_2, v_3\right]\cup \left[-v_1, -v_2\right]} \left\|x - v_2\right\|+\left\|x + v_2\right\|,
            \end{eqnarray*}
            \begin{eqnarray*}
            \alpha_3 = \max_{x \in \left[v_3, -v_1\right]\cup \left[-v_2, -v_3\right]} \left\|x - v_3\right\|+\left\|x + v_3\right\|.
            \end{eqnarray*}
            By Lemma 4.2 we have
            \begin{eqnarray*}
            \max (\alpha_1, \alpha_2, \alpha_3) \leq 3.
            \end{eqnarray*}
            If $\max (\alpha_1, \alpha_2, \alpha_3) = 3$, let w.l.o.g. $\alpha_1 = 3$. \\
            Again applying Lemma 4.2 we get $\left[v_1, v_2\right] = \overline{v_1 v_2}$ or $\left[v_2, v_3\right] = \overline{v_2 v_3}$ or
            $\left[v_3, -v_1\right] = \overline{v_3 (-v_1)}$, a contradiction to $\left[v_1, v_2\right] \neq \overline{v_1 v_2}$,
            $\left[v_2, v_3\right] \neq \overline{v_2 v_3}$ and $\left[v_3, -v_1\right] \neq \overline{v_3 (-v_1)}$.\\\newline
            Hence we can assume that
            \begin{eqnarray*}
            \max (\alpha_1, \alpha_2, \alpha_3) < 3.
            \end{eqnarray*}
            By assumption we have
            \begin{eqnarray*}
            \min_{i}\max_{x \in S} \left\|x - v_i\right\|+\left\|x + v_i\right\| \geq 3.
            \end{eqnarray*}
            Summing up, we can choose $z_1 \in \left(v_2, v_3\right)$, $z_2 \in \left(v_3, -v_1\right)$ and\\
            $z_3 \in \left(-v_1, -v_2\right)$ such that $\left\|z_i - v_i\right\|+\left\|z_i + v_i\right\| \geq 3$, for all $i = 1,2,3$ .\\\newline
            As in the proof of Theorem 3.1 let $B_0$ be the convex hull of \\
            $\left\{\pm v_1, \pm v_2, \pm v_3, \pm z_1, \pm z_2, \pm z_3\right\}$.\\\newline
            $B_0$ defines a norm $\left\|.\right\|_0$, such that $B_0 = \left\{x \in E, \left\|x\right\|_0 \leq 1\right\}$ and\\
            $\left\|x\right\| \leq \left\|x\right\|_0$, for all $x$ in $E$.\\\newline
            The proof of Theorem 3.1 leads to
            \begin{eqnarray*}
            \min_{i} M_i \leq 3,
            \end{eqnarray*}
            where
            \begin{eqnarray*}
            M_i = \left\|z_i  - v_i\right\|_0 +\left\|z_i + v_i\right\|_0,
            \end{eqnarray*}
            for $i = 1,2,3$.\\
            Since
            \begin{eqnarray*}
            3 \leq \min_i \left\|z_i - v_i\right\| +\left\|z_i + v_i\right\| \leq \min_i M_i \leq 3
            \end{eqnarray*}
            we get
            \begin{eqnarray*}
            \min_{i} M_i = 3.
            \end{eqnarray*}
            As in the proof of Theorem 3.1, let \\
            $z_1 = x_1 v_2 + y_1 v_3$, $z_2 = x_2 v_3 + y_2 (-v_1)$, $z_3 = x_3 (-v_1) + y_3 (-v_2)$,
            with $0 < x_i \leq 1$, $0 < y_i \leq 1$, $x_i + y_i \geq 1$ and $\frac{1 - x_i}{y_i} + \frac{1 - y_{i + 2}}{x_{i + 2}} \geq 1$, for all $i = 1,2,3$
            ($x_4 = x_1$, $x_5 = x_2$, $y_4 = y_1$, $y_5 = y_2$). Since none of the arcs $\left[v_1, v_2\right]$,
            $\left[v_2, v_3\right]$, $\ldots$, $\left[-v_3, v_1\right]$ are line segments we get: \\
            $0 < x_i < 1$, $0 < y_i < 1$ and $x_i + y_i > 1$. Again for $i = 1,2,3$ set
            \begin{eqnarray*}
            a_i = 1 - \frac{1 - x_i}{y_i},
            \end{eqnarray*}
        \begin{eqnarray*}
            b_i = 1 - \frac{1 - y_i}{x_i},
            \end{eqnarray*}
            $a_4 = a_1, a_5 = a_2, b_4 = b_1$ and $b_5 = b_2$
          ($0 < a_i < 1$, $0 < b_i < 1$, $a_i + b_{i+2} \geq 1$, for all $i = 1,2,3$).\\\newline
          By Lemma 4.4 and
          \begin{eqnarray*}
            \min_i M_i = 3
            \end{eqnarray*}
            we obtain $a_1 = a_ 2 = a_3$, $b_1 = b_2 = b_3$, $a_1 + b_3 = 1$, $a_2 + b_1 = 1$ and $a_3 + b_2 = 1$.\\\newline
            This implies $x_1 = x_ 2 = x_3$, $y_1 = y_2 = y_3$ and $\frac{1 - x_i}{y_i} + \frac{1 - y_{i + 2}}{x_{i + 2}} = 1$, for all $i = 1,2,3$.\\\newline
            Looking at $v_3$ the equality $\frac{1 - x_2}{y_2} + \frac{1 - y_1}{x_1} = 1$ implies, that the closed halfspaces $H_1$ and $H_2$ defined in the proof
            of Theorem 3.1 coincide. Therefore the line segment $\overline{z_1 z_2} = \overline{z_1 v_3}\cup \overline{v_3 z_2}$ is contained in $S$.\\\newline
            The same argument for $(-v_1)$ and $(-v_2)$ shows that $S$ is a hexagon with vertex set $\left\{\pm z_1, \pm z_2, \pm z_3\right\}$.\\\newline
            Finally let $x = x_1 = x_2 = x_3$ and $y = y_1 = y_2 = y_3$.\\
            Now
            \begin{eqnarray*}
            z_2 - z_1 =& x v_3 + y (-v_1) - x v_2 - y v_3 \\=& (x - y)(v_2 - v_1) - y v_1 - x v_2 \\=& z_3
            \end{eqnarray*}
            and therefore $S$ is an affine regular hexagon with vertex set $\left\{\pm z_1, \pm z_2, \pm z_3\right\}$.
\end{enumerate}
\end{proof}


\begin{thebibliography}{10}

 \bibitem{1} Marco Baronti, Emanuele Casini and Pier Luigi Papini, \emph{Triangles Inscribed in a Semicircle, in Minkowski Planes, and in Normed Spaces},
 Journal of Mathematical Analysis and Applications \textbf{252} (2000), 124-146.

 \bibitem{2} K. Leichtweiss, \emph{Konvexe Mengen}, Berlin (1979).

 \bibitem{3} Horst Martini, Konrad J. Swanepoel and Gunter Wei{\ss} \emph{The Geometry of Minkowski Spaces - A Survey. Part I},
 Expositiones Mathematicae \textbf{19} (2001), 97-142.

\end{thebibliography}
\end{document}